\title{Canonical $\beta$-extensions}
\author{Andrea Dotto}
\date{}
\begin{document}

\maketitle

\begin{abstract}
We compare the level zero part of the type of a representation of~$\GL(n)$ over a local non-archimedean field with the tame part of its Langlands parameter restricted to inertia. By normalizing this comparison, we construct canonical $\beta$-extensions of maximal simple characters.
\end{abstract}
\setcounter{tocdepth}{1}
\tableofcontents
\section{Introduction.}
Let~$F$ be a non-archimedean local field with residue field~$\mbf$ of characteristic~$p$. 
The category of smooth representations of~$\GL_n(F)$ with complex coefficients is a central object of interest in the classical local Langlands correspondence, and has been thoroughly studied by many authors.
As a result, after the fundamental work of Bushnell--Kutzko~\cite{BKbook} and Bernstein--Zelevinsky~\cite{BZsurvey, BZinduced, Zelevinskyinduced} there is a complete classification of its irreducible objects.
This classification is very explicit, but some parts of it depend on choices which can prove ambiguous in applications and do not always have a clear interpretation in Galois-theoretic terms across the local Langlands correspondence.
The aim of this short note is to resolve one of these ambiguities by specifying a choice of so-called \emph{$\beta$-extensions} which has good properties with respect to functorial procedures such as parabolic induction, the local Langlands correspondence and the Jacquet--Langlands correspondence.

To put our results in context we begin by recalling a consequence of a theorem of Bushnell and Henniart, namely~\cite[Types Theorem]{BHeffective}, which gives a description of the maximal simple types contained in an irreducible supercuspidal~$\GL_n(F)$ representation~$\pi$ in terms of its Langlands parameter~$\rec(\pi)$.
More precisely, we will work with the ``level zero part" of~$\rec(\pi)$, which is a Galois conjugacy class of characters~$[\chi]$ of the multiplicative group of a finite extension~$\be_{n/\delta(\Theta_F)}$ of the residue field of~$F$, defined in Section~\ref{Langlandsparameters}.
This construction is elementary in nature but quite involved, and amounts to an instance of Clifford theory for the Weil group~$W_F$.
On the other hand, let $\theta: H^1_\theta \to \bC^\times$ be a maximal simple character contained in~$\pi$, and write~$\Theta_F$ for its endo-class.
Then~\cite[Types Theorem]{BHeffective} implies that there exists a unique $\beta$-extension~$\kappa_{n, F}$ of~$\theta$ such that the representation
\[
\bK_{\kappa_{n, F}}(\pi) = \Hom_{J^1_\theta}(\kappa_{n, F}, \pi)
\]
corresponds to~$[\chi]$ under the Green parametrization (or equivalently, Deligne--Lusztig induction).
Varying~$n$ amongst multiples of the degree of~$\Theta_F$, we get a conjugacy class~$\kappa_{m, F}$ of $\beta$-extensions of maximal simple characters in~$\GL_m(F)$ of endo-class~$\Theta_F$ whenever the degree of~$\Theta_F$ divides~$m$. 

In order to make these statements precise one should provide definitions of all the objects involved and normalize various choices: we will do this to the extent needed in the main body of the article, referring to~\cite{InertialJL} for a full exposition.
Amongst other things, one should recall that~$\Hom_{J^1_\theta}(\kappa_{n, F}, \pi)$ is a representation of a group~$J_\theta/J^1_\theta$ which is isomorphic to a finite general linear group~$\GL_{n/\delta(\Theta_F)}(\be)$, allowing one to define the Green parametrization of its irreducible representations.
(Actually, we will not work with~$\bK_{\kappa_{n, F}}(\pi)$ but directly with its Green parameter: see Section~\ref{invariantsG}, Remark~\ref{KLambda}).
Our main result can be understood as a generalization of this property of~$\kappa_{n, F}$ to essentially square-integrable representations, or more generally to representations which are~\emph{simple}, in the sense that their supercuspidal support is inertially equivalent to one of the form~$[\GL_{n/r}(F)^{\times r}, \pi_0^{\otimes r}]$.

\begin{thm}\label{BHgeneralize}
(Theorem~\ref{compatiblecanonical}.)
Let~$\pi$ be an irreducible smooth representation of~$\GL_n(F)$ whose supercuspidal support is inertially equivalent to~$[\GL_{n/r}(F)^{\times r}, \pi_0^{\otimes r}]$.
Then the supercuspidal support of every Jordan--H\"older factor of~$\bK_{\kappa_{n, F}}(\pi)$ is a multiple of~$\bK_{\kappa_{n/r, F}}(\pi_0)$.
\end{thm}

Notice that in this case we have $\rec(\pi)|_{I_F} \cong \rec(\pi_0)^{\oplus r}|_{I_F}$, and~$\rec(\pi_0)$ is an irreducible representation with a level zero part~$[\chi_0]$. 
Then Theorem~\ref{BHgeneralize} relates~$[\chi_0]$ to a type-theoretic invariant of~$\pi$, namely the supercuspidal support of~$\bK_{\kappa_{n, F}}(\pi)$, and in this sense it generalizes~\cite[Types Theorem]{BHeffective} to non-cuspidal simple representations.

An equivalent way of stating Theorem~\ref{BHgeneralize} is that~$\kappa_{n, F}$ and~$\kappa_{n/r, F}$ are \emph{compatible} in the sense of~\cite[Section~3.3]{InertialJL}, and this is the statement that we actually prove in Theorem~\ref{compatiblecanonical}.
As~$n$ varies, the representations~$\kappa_{n, F}$ can therefore be understood as forming a family of $\beta$-extensions canonically attached to~$\Theta_F$, by virtue of their compatibility with parabolic induction and the local Langlands correspondence.
Furthermore, in Section~\ref{innerforms} we apply the main results of~\cite{InertialJL} to extend this family to all inner forms of~$\GL_n(F)$ and prove compatibility with the Jacquet--Langlands correspondence. 
See Theorem~\ref{betainnerforms}.

We end this introduction by discussing the relationship of this note with other works in the literature.
The lack of a canonical choice of~$\beta$-extensions has been noticed in several contexts, for instance~\cite{Blondelbeta} and~\cite{BHSsymplectic}, and our method provides a complete solution to this problem for inner forms of~$\GL_n$.
We emphasize that the compatibility of two given $\beta$-extensions in two different general linear groups (such as~$\GL_n(F)$ and~$\GL_{n/r}(F)$) could presumably be addressed by explicit calculations using only the constructions of~\cite{BKbook}: the main contribution of this note is another method to address this compatibility, which works for all inner forms of~$\GL_n(F)$, uniformly in~$n$ and without needing any calculation. 
It consist in adapting a technique from~\cite{SecherreStevensJL, InertialJL} involving congruences modulo a prime~$\ell \ne p$.
The main input is the verification of certain formal and numerical properties of the local Langlands correspondence with respect to reduction mod~$\ell$, which we do by applying work of Vign\'eras \cite{Vignerasl2, Vignerasl1}. 
As a result, it is quite likely that the method would apply to other groups if similar properties of the local Langlands correspondence were established. 

\subsection{Acknowledgments.} This paper grew from a comment of Vincent S\'echerre and Shaun Stevens regarding~\cite{InertialJL}, namely that $p$-primary $\beta$-extensions in different groups might not be compatible. I am grateful to them for this remark. The idea that the local Langlands correspondence might be used to normalize $\beta$-extensions was suggested by Colin Bushnell. This work was supported by the Engineering and Physical Sciences Research Council [EP/L015234/1], The EPSRC Centre for Doctoral Training in Geometry and Number Theory (The London School of Geometry and Number Theory), University College London, and Imperial College London.

\subsection{Notation and conventions.} 
To keep this note concise we will not dwell for long on type-theoretic background but refer the reader to standard references such as~\cite{BKbook, BHeffective} for a detailed treatment of the theory.
Our notation in this paper will mostly follow~\cite{InertialJL}.
We let~$F$ denote a nonarchimedean local field, $\mbf$ the residue field of~$F$, $F_n$ the unramified extension of~$F$ of degree~$n$ in some fixed algebraic closure~$\overline{F}$ of~$F$, and~$\mbf_n$ the residue field of~$F_n$. 
The group of Teichm\"uller roots of unity in~$F$ is denoted~$\mu_F$. 
We write~$W_F$ for the Weil group of~$\lbar{F}/F$, $I_F$ for the inertia group and $P_F$ for the wild inertia group. 
We normalize the Artin map $\Art_F: F^\times \to W_F^{\mathrm{ab}}$ so that uniformizers correspond to geometric Frobenius elements. 
If~$\sigma$ is a representation of~$W_F$, its twist by the unramified character of~$W_F$ sending a geometric Frobenius element to~$q^{-n}$ for~$n \in \bZ$ is denoted~$\sigma(n)$. 
This character for~$n = 1$ corresponds to the normalized absolute value of~$F$ under~$\Art_F$, hence we denote it by $w \mapsto |w|$. 

For a prime number~$\ell$, we say that an element~$g$ of a finite group is $\ell$-primary if it has order a power of~$\ell$ and $\ell$-regular if it has order coprime to~$\ell$. 
We write $g^{(\ell)}$ for the $\ell$-regular part of~$g$ and~$g_{(\ell)}$ for the $\ell$-primary part of~$g$, so that~$g = g^\pexp{\ell}g_\pexp{\ell}$.

Representations of a locally profinite group like $\GL_n(F)$ or~$W_F$ are assumed to be smooth (and finite-dimensional for~$W_F$), with coefficients over an algebraically closed field~$R$ of characteristic different from~$p$, which will be specialized to~$\bC$, $\cbQ_\ell$ and~$\cbF_\ell$ in the course of the paper. 
When $R = \cbQ_\ell$ and~$V$ is a $\cbQ_\ell$-integral representation of finite length we write~$\br_\ell(V)$ for the semisimplified mod~$\ell$ reduction in the sense of~\cite[Section~1.4]{Vignerasl2}.
Parabolic induction from a standard Levi subgroup is always normalized and taken along the upper-triangular parabolic, and we write $\pi_1 \times \cdots \times \pi_n$ for the parabolic induction of $\pi_1 \otimes \cdots \otimes \pi_n$. 
Working with normalized induction requires us to fix a square root of~$q$ in~$R^\times$, but changing it does not modify the inertial class of the supercuspidal support of any given irreducible representation, hence the choice will not affect any of our results about inertial classes.

\section{Invariants of representations of $\GL_n(F)$.}\label{invariantsG}
Let~$R$ be an algebraically closed field of characteristic different from~$p$.
The irreducible smooth representations of~$R[\GL_n(F)]$ are partitioned according to the blocks of the category of smooth representations, which are parametrized by inertial classes of supercuspidal supports, following work of Bernstein~\cite{BernsteinDeligne} (when~$R$ has characteristic zero) and S\'echerre--Stevens~\cite{SecherreStevensblocks} (when~$R$ has positive characteristic).
By definition, an inertial class is \emph{simple} if it contains a representative of the form
\[
\left [ \GL_{n/r}(F)^{\times r}, \pi_0^{\otimes r} \right ]
\]
for an irreducible supercuspidal $R[\GL_{n/r}(F)]$-representation~$\pi_0$.

In~\cite{InertialJL} we have written down a parametrization of simple inertial classes~$\fs$ in terms of two type-theoretic invariants, which we recall briefly.
The first one is denoted~$\cl(\fs)$ and consists of an endo-class of simple characters defined over~$F$.
To any endo-class~$\Theta_F$ there are associated the following objects: 
\begin{itemize}
\item a finite unramified extension~$E/F$ contained in our fixed algebraic closure~$\lbar F / F$, called the unramified parameter field of~$\Theta_F$. The degree~$[E:F]$ is denoted~$f(\Theta_F)$.
\item a number~$\delta(\Theta_F)$, called the degree of~$\Theta_F$, which is divisible by~$[E:F]$. We also introduce~$e(\Theta_F) = \delta(\Theta_F)/f(\Theta_F)$.
\item when~$\Theta_F$ is the endo-class of a maximal simple character in~$\GL_n(F)$, the degree~$\delta(\Theta_F)$ divides~$n$ and we have the finite cyclic Galois group~$\Gal(\be_{n/\delta(\Theta_F)}/\be)$, which we denote~$\Gamma(\Theta_F)$. It acts on the set of $R$-valued characters of~$\be_{n/\delta(\Theta_F)}^\times$, which we denote~$X_R(\Theta_F)$.
\end{itemize}
The second invariant of simple inertial classes, called the level zero part, depends on the choice of a lift $\Theta_E \to \Theta_F$ of~$\Theta_F = \cl(\fs)$ to an endo-class defined over~$E$ and on the choice of a conjugacy class~$\kappa$ of $\beta$-extensions of maximal simple characters in~$\GL_n(F)$ of endo-class~$\Theta_F$.
(We will also refer to such objects as \emph{maximal $\beta$-extensions} of endo-class~$\Theta_F$.)
The invariant associates to a simple inertial class of $R[\GL_n(F)]$-representations of endo-class~$\cl(\fs) = \Theta_F$ an orbit of~$\Gamma(\Theta_F)$ on $X_R(\Theta_F)$, denoted
\[
\Lambda(\fs, \Theta_E, \kappa) \in \Gamma(\Theta_F) \backslash X_R(\Theta_F).
\]

By~\cite[Formulas~(3.5) and~(3.6)]{InertialJL}, the map~$\Lambda(-, \Theta_E, \kappa) $ depends in a simple way on the choice of~$\Theta_E$ and~$\kappa$.
By~\cite[Theorem~3.21]{InertialJL}, the pair $(\cl(\fs), \Lambda(\fs, \Theta_E, \kappa))$ determines the simple inertial class~$\fs$ uniquely.

\begin{rk}\label{KLambda}
Let~$\pi$ be an irreducible representation in the inertial class~$\fs$. 
We make explicit the connection of~$\Lambda(\fs, \Theta_E, \kappa)$ to the representation~$\bK_\kappa(\pi)$ that occurs in the introduction.
Assume first that~$\pi$ is supercuspidal.
The group~$J_\theta/J^1_\theta$ is noncanonically isomorphic to a finite general linear group, and the role of the lift~$\Theta_E \to \Theta_F$ is to select an inner conjugacy class of isomorphisms
\[
\Psi(\Theta_E): J_\theta/J^1_\theta \isom \GL_{n/\delta(\Theta_F)}(\be).
\]
Then~$\bK_\kappa(\pi)$ can be unambiguously identified with a representation of~$\GL_{n/\delta(\Theta_F)}(\be)$, which turns out to be irreducible and supercuspidal.
By the Green parametrization, it corresponds to an element of~$\Gamma(\Theta_F)\backslash X_R(\Theta_F)$, which is~$\Lambda(\fs, \Theta_E, \kappa)$ by definition.

Assume now that the supercuspidal support of~$\pi$ is inertially equivalent to~$[\GL_{n/r}(F)^{\times r}, \pi_0^{\otimes r}]$.
Then~$\cl(\pi) = \cl(\pi_0)$, and by~\cite[Proposition~3.13, Definition~3.14]{InertialJL} there exists a unique conjugacy class~$\kappa_0$ of maximal $\beta$-extensions in~$\GL_{n/r}(F)$ that is compatible with~$\kappa$, in the sense that for all irreducible representations~$\pi_0$ of~$\GL_{n/r}(F)$ there is an isomorphism
\[
\bK_{\kappa}(\pi_0^{\times r}) \isom \bK_{\kappa_0}(\pi_0)^{\times r}.
\]
(See~\cite[Definition~3.11]{InertialJL} for more details on the definition of compatibility.)
By definition, $\Lambda(\pi, \Theta_E, \kappa)$ is the inflation of $\Lambda(\pi_0, \Theta_E, \kappa_0)$ to~$\be_{n/\delta(\Theta_F)}^\times$ via the norm $\be_{n/\delta(\Theta_F)}^\times \to \be_{n/r\delta(\Theta_F)}^\times$.
\end{rk}

\section{Invariants of Weil--Deligne representations.}\label{Langlandsparameters}
In this section we are concerned with establishing analogues for Weil--Deligne representations of the invariants of Section~\ref{invariantsG}.
We do so in order to give a precise definition of~$\kappa_{n, F}$ and to state its properties within the framework of \cite{SecherreStevensJL, InertialJL}.

\subsection{Local Langlands correspondence.}
To start with, we briefly review the local Langlands correspondence for~$\GL_n(F)$.
The Langlands parameters for~$\GL_n(F)$ can be identified with Frobenius-semisimple Weil--Deligne representations, which are pairs $(V, N)$ consisting of a semisimple smooth representation of~$W_F$ and a nilpotent monodromy operator $N: V(1) \to V$. They can be written uniquely as direct sums
\begin{displaymath}
V = \bigoplus_i \sigma_i \otimes \Sp(n_i)
\end{displaymath} 
for irreducible smooth representations~$\sigma_i$ of~$W_F$. The special representation~$\Sp(n)$ has a basis $\{ e_1, \cdots, e_n\}$ such that $we_i = |w|^{i-1}e_i$ for~$w \in W_F$, and the monodromy acts as $Ne_i = e_{i+1}$ for $i \in \{1, \ldots, n-1 \}$.

The local Langlands correspondence is a bijection, denoted~$\rec$, of the isomorphism classes of irreducible smooth representations of~$\bC[\GL_n(F)]$ onto the complex Frobenius-semisimple Weil--Deligne representations of dimension~$n$. 
It restricts to a bijection~$\rec^0$ from supercuspidal irreducible representations to irreducible smooth $\bC[W_F]$-representations (notice that since the kernel of~$N$ is stable under~$W_F$, these have trivial monodromy). 
Amongst the many properties of~$\rec$, we will need its compatibility with the Bernstein--Zelevinsky classification.

To state this compatibility, recall that a \emph{segment} of supercuspidal representations of~$\bC[\GL_{n}(F)]$, of length~$m$, consists of a sequence 
\begin{displaymath}
(\rho, \rho(1), \ldots, \rho(m-1))
\end{displaymath} 
of twists of a supercuspidal~$\rho$ by powers of the unramified character $g \mapsto |\det(g)|$. 
The Bernstein--Zelevinsky classification is a bijection of $\coprod_{m \geq 0} \Irr \bC[\GL_m(F)]$ with the set of multisets of segments of supercuspidal representations.
Assume that~$\pi \in \Irr \bC[\GL_n(F)]$ corresponds to the multiset~$\{ \Delta_1, \ldots, \Delta_r\}$, where $\Delta_i = (\rho_i, \ldots, \rho_i(n_i - 1))$. 
Then the construction of~$\rec$ from~$\rec^0$ implies that $\rec(\pi) = \bigoplus_i \rec(\rho_i) \otimes \Sp(n_i)$. 
Hence, if~$\pi$ has supercuspidal support
\begin{equation}\label{Langlandssupport}
[\GL_{n_1}(F) \times \cdots \times \GL_{n_r}(F), \pi_1 \otimes \cdots \otimes \pi_r]
\end{equation}
then the $W_F$-representation underlying~$\rec(\pi)$ is $\rec(\pi_1) \oplus \cdots \oplus \rec(\pi_r)$.
This property of~$\rec$ implies that the inertial class of an irreducible representation of~$\GL_n(F)$ is described by the restriction to inertia of its Langlands parameter, in the following sense. 
For a Weil--Deligne representation~$\tau$, write~$\tau|I_F$ to denote the restriction to~$I_F$ of the underlying $W_F$-representation.
Let~$\pi_1, \pi_2$ be two irreducible smooth representations of~$\bC[\GL_n(F)]$.  
Then $\rec(\pi_1)|_{I_F} \cong \rec(\pi_2)|_{I_F}$ if and only if~$\pi_1$ and~$\pi_2$ are inertially equivalent.

In the course of this paper will also need to work over~$\cbQ_\ell$ for primes~$\ell \not = p$. 
To do so, we can fix a ring isomorphism~$\iota_\ell: \bC \to \cbQ_\ell$ and then transfer~$\rec$ to a bijection~$\rec_\ell$ from irreducible smooth $\cbQ_\ell[\GL_n(F)]$-representations to Frobenius-semisimple Weil--Deligne representations of dimension~$n$ over~$\cbQ_\ell$. 
Some care has to be taken here since the Langlands correspondence does not commute with all automorphisms of~$\bC$, and one way of getting around this is to fix a square root of~$q$ in~$\bC$ and~$\cbQ_\ell$ and to work with isomorphisms~$\iota_\ell$ that preserve it. 
However, any two choices of~$\iota_\ell$ define bijections~$\rec_\ell$ which differ at most by a quadratic unramified twist at any given $\cbQ_\ell$-representation of~$\GL_n(F)$. 
Since we will mostly be concerned with the restriction to inertia of Weil--Deligne representations, our results will be independent of the choice of~$\iota_\ell$.

\subsection{Endo-classes.}
Bushnell and Henniart have shown how to attach an endo-class to an irreducible Weil group representation, making use of the following result. 
Write~$P_F^\vee$ for the set of complex irreducible smooth representations of the wild inertia group~$P_F \subset G_F$, and~$\mathcal{E}(F)$ for the set of endo-classes of simple characters over~$F$. 
There is a left action of~$W_F$ on~$P_F^\vee$ by conjugation. 
If~$\sigma$ is an irreducible representation of~$W_F$, then let~$r_F^1(\sigma) \in W_F \backslash P_F^\vee$ be the orbit contained in the restriction $\sigma |_{P_F}$ (which need not be multiplicity-free).

\begin{thm}[See~\cite{BHeffective}, Ramification Theorem]\label{ramification} 
The Langlands correspondence induces a bijection 
\begin{displaymath}
\Phi_F : W_F \backslash P_F^\vee \to \mathcal{E}(F)
\end{displaymath}
such that $\Phi_F(r^1_F(\sigma))$ is the endo-class of~$\rec^{-1}(\sigma)$ for any irreducible~$\sigma$. If $\gamma : F \to F$ is a topological automorphism, extended in some way to an automorphism of~$W_F$, then $\Phi_F(\gamma^*[\alpha]) = \gamma^*\Phi_F[\alpha]$ for all $[\alpha] \in W_F \backslash P_F^\vee$.
\end{thm}

The Ramification Theorem holds with coefficients in $\cbQ_\ell$, because any isomorphism $\iota_\ell: \bC \isom \cbQ_\ell$ induces via~$\rec_\ell$ a bijection between endo-classes for~$F$ over~$\cbQ_\ell$ and orbits of~$W_F$ on irreducible smooth $\cbQ_\ell$-representations of~$P_F$. This bijection is independent of the choice of~$\iota_\ell$.

Since~$P_F$ is a pro-$p$ group, the orbits of its irreducible smooth $\cbF_\ell$-representations under~$W_F$ are identified with those over~$\cbQ_\ell$ by choosing a lattice (which will be unique up to homothety) and reducing mod~$\ell$ (the reduction will be irreducible). 
Similarly, the endo-classes over~$\cbQ_\ell$ are identified with those over~$\cbF_\ell$, and the Ramification Theorem also holds over~$\cbF_\ell$.

\subsection{Level zero maps.}\label{levelzeroparameters}
By definition, a \emph{supercuspidal inertial type} for~$W_F$ is the restriction to inertia of an irreducible representation~$\sigma$ of~$W_F$. 
In this section, we use Clifford theory for the group~$W_F$ over the algebraically closed field~$R$ (of characteristic different from~$p$), as in~\cite{Vignerasl1} and Section~1 of~\cite{BHeffective}, to define the level zero part of a supercuspidal inertial type. 

Let~$\sigma$ be an irreducible smooth $R[W_F]$-representation of dimension~$n$. 
Since~$P_F$ is a normal subgroup of~$W_F$, the restriction $\sigma|_{P_F}$ is semisimple and consists of a single~$W_F$-orbit of irreducible representations (possibly with multiplicity). 
Let~$\alpha$ be a representative of this $W_F$-orbit, which we will denote~$[\alpha]_F$. 
Let~$T = T_\alpha$ be the tamely ramified extension of~$F$ corresponding to the stabilizer in~$W_F$ of the isomorphism class of~$\alpha$. 
It is a subfield of~$\lbar F$.

By~\cite[1.3]{BHeffective}, there exists a unique extension~$\rho_\alpha$ of~$\alpha$ to~$I_T$ with $p$-primary determinant, and~$\rho_\alpha$ extends to~$W_T$. 
We denote by~$\rho(\alpha)$ an arbitrary choice of extension of~$\rho_\alpha$ to~$W_T$. 
As in~\cite[Section~2.6]{Vignerasl1}, there exists a unique tamely ramified representation $\sigmatr(\alpha)$ of~$W_T$, denoted~$\tau$ in~\cite{BHeffective}, such that $\sigma \cong \Ind_T^F(\rho(\alpha) \otimes \sigmatr(\alpha))$. 

\begin{lemma}\label{alphaisotypic}
The $\alpha$-isotypic component of~$\sigma$, denoted~$\sigma_\alpha$, is isomorphic to $\rho(\alpha) \otimes \sigma^\tr(\alpha)$ as a representation of~$W_T$.
\end{lemma}
\begin{proof}
Notice that $\rho(\alpha) \otimes \sigmatr(\alpha)$ is an irreducible $W_T$-subspace of~$\sigma_\alpha$.
Let~$\{g_i\}$ be a set of representatives of~$W_F/W_T$ in~$W_F$.
Then $g_i(\rho(\alpha) \otimes \sigmatr(\alpha)) \subset \sigma_{g_i\alpha}$, hence
\[
R[W_F](\rho(\alpha) \otimes \sigmatr(\alpha)) = \bigoplus_i g_i (\rho(\alpha) \otimes \sigmatr(\alpha))
\]
and $R[W_F](\rho(\alpha) \otimes \sigmatr(\alpha))$ would be a proper $W_F$-subspace of~$\sigma$ if $\rho(\alpha) \otimes \sigmatr(\alpha)$ were properly contained in~$\sigma_\alpha$. 
\end{proof}

The representation~$\sigmatr(\alpha)$ can be written uniquely as an induced representation $\Ind_{T_d}^T(\chi_1(\alpha))$ for some unramified extension $T_d/T$ of degree~$d>0$ and some $\Gal(T_d/T)$-orbit of $T$-regular characters~$[\chi_1(\alpha)]$ of~$T_d^\times$ such that $\chi_1(\alpha)$ is trivial on the $1$-units $U^1(T_d)$. 
(We regard~$\chi_1(\alpha)$ as a character of~$W_{T_d}$ via the Artin reciprocity map $\Art_{T_d}^{-1}: W_{T_d} \to T_d^\times.$)
We find that $\sigma \cong\Ind_{T_d}^F(\rho_d(\alpha) \otimes \chi_1(\alpha))$ for the restriction~$\rho_d(\alpha)$ of~$\rho(\alpha)$ to~$W_{T_d}$. 

\begin{rk}\label{recoverisotypic}
Write~$\chi(\alpha) = \chi_1(\alpha) |\mu_{T_d}$. Let~$[\chi(\alpha)]$ be its orbit under~$\Gal(T_d/T)$. The restriction of~$\sigma_\alpha$ to~$I_{T_d} = I_T$ is a direct sum of the twists $\rho_\alpha \otimes \chi$ for~$\chi \in [\chi(\alpha)]$, hence we can recover~$[\chi(\alpha)]$ from~$\sigma$ as follows.
Take the $\alpha$-isotypic component~$\sigma_\alpha$ and restrict it to~$I_{T_\alpha}$. 
The restriction will decompose as a direct sum of twists of~$\rho_\alpha$ (which is the only irreducible extension of~$\alpha$ to~$I_{T_\alpha}$ with $p$-primary determinant character) by characters of~$\mu_{T_d}$.
Since $\mu_{T_d}$ has order coprime to~$p$ and $\dim \rho_\alpha$ is a power of~$p$, the map $\chi \mapsto \rho_\alpha \otimes \chi$ is injective, and this determines~$[\chi(\alpha)]$ as the set of characters such that~$\rho_\alpha \otimes \chi$ is a constituent of~$\sigma_\alpha | I_{T_\alpha}$.
\end{rk}

By Theorem~\ref{ramification}, the $W_F$-orbit $[\alpha]_F$ defines an endo-class~$\Theta_F = \Phi_F[\alpha]_F$. 

\begin{lemma}\label{dimensioncomputation}
We have the equality~$d = n/\delta(\Theta_F)$.
\end{lemma}
\begin{proof}
By \cite[Tame Parameter Theorem]{BHeffective}, the field~$T$ is isomorphic over~$F$ to a tame parameter field for~$\Theta_F$, and the degree~$\delta(\Theta_F)$ equals~$[T:F]\dim \alpha$. 
One the other hand, $\sigma$ decomposes as the direct sum of its $\alpha$-isotypic components for $\alpha \in [\alpha]_F$. 
Since the orbit $[\alpha]_F$ has~$[T:F]$ elements, Lemma~\ref{alphaisotypic} implies that we have the equality 
\[
n = [T:F](\dim \alpha)(\dim \sigmatr(\alpha)). 
\]
Hence $d = \dim \sigmatr(\alpha) = n/\delta(\Theta_F)$.
\end{proof}

Let us introduce the maximal unramified extension~$E = \Tur_\alpha$ of~$F$ in~$T_\alpha$. 
This is independent of the choice of~$\alpha$, and by \cite[Tame Parameter Theorem]{BHeffective} it is the unramified parameter field of~$\Theta_F$ in~$\overline{F}$. 
At this stage, we have attached to~$\sigma$ an endo-class~$\Theta_F$ of degree dividing~$n = \dim(\sigma)$, and whenever we choose a representative~$\alpha$ of the orbit~$[\alpha]_F$ attached to~$\Theta_F$, we obtain a $\Gal(\be_{n/\delta(\Theta_F)}/\be)$-orbit~$[\chi(\alpha)]$ of~$\be$-regular characters of~$\be_{n/\delta(\Theta_F)}^\times$, since $\mu_T = \mu_E$, $\mu_{T_d} = \mu_{E_d}$ and~$d = n/\delta(\Theta_F)$.
This is the same set of data occurring in Section~\ref{invariantsG}.
To complete the connection, we consider how~$[\chi(\alpha)]$ changes when we change representative $\alpha \in [\alpha]_F$. 

\begin{lemma}\label{changerepresentative}
Let~$g \in W_F$. Then~$[\chi(\ad(g)^*\alpha)]$ only depends on the image of~$g$ in~$W_F/W_E \cong \Gal(E/F)$.
\end{lemma}
\begin{proof}
By our explicit description of~$[\chi(\alpha)]$ in terms of the $\alpha$-isotypic component of~$\sigma$ (see Remark~\ref{recoverisotypic}) it follows that  $g^*[\chi(\ad(g)^*\alpha)] = [\chi(\alpha)]$. However, by definition, the group~$W_E$ fixes the $\Gal(E_d/E)$-conjugacy classes of characters of~$\mu_{E_d}$, for every~$d$.
\end{proof}

By Theorem~\ref{ramification}, a lift~$\Theta_E$ of~$\Theta_F$ to~$E$ defines an orbit~$[\alpha]_E$ of~$W_E$ on~$[\alpha]_F$. 
Hence we can define a set of characters
\[
\Lambda^+(\sigma, \Theta_E) \in \Gamma(\Theta_F) \backslash X_R(\Theta_F)
\]
by setting~$\Lambda^+(\sigma, \Theta_E) = [\chi(\alpha)]$ for any~$\alpha$ such that~$\Theta_E = \Phi_E[\alpha]_E$. 
By Lemma~\ref{changerepresentative} this is well-defined.
The behaviour of this level zero map~$\Lambda^+(-, \Theta_E)$ under change of lifts is the same as for~$\GL_n(F)$.

\begin{lemma}\label{changelift}
Let~$\gamma \in \Gal(E/F)$. Then $\gamma^* \Lambda^+(-, \gamma^*\Theta_E) = \Lambda^+(-, \Theta_E)$.
\end{lemma}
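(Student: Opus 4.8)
The plan is to trace through the explicit construction of $\Lambda^+_{\Theta_E}$ and observe that replacing $\Theta_E$ by $\gamma^*\Theta_E$ has the effect of moving the chosen representative $\alpha$ within its $W_F$-orbit by a Weil element lifting $\gamma$, which in turn twists the resulting character orbit by $\gamma^{-1}$ (equivalently, applying $\gamma^*$ brings it back). This is essentially the same bookkeeping as in Proposition~\ref{urparameterlift} and the paragraph preceding the statement, transposed from the type-theoretic side to the Galois side.

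\begin{proof}
By Proposition~\ref{urparameterlift} applied on the Galois side via the Ramification Theorem~\ref{ramification} (equivalently, by the transitivity discussion above), the group $\Gal(E/F) \cong W_F/W_E$ acts simply transitively on the set of $W_E$-orbits inside $[\alpha]_F$; so a lift $\Theta_E \to \Theta_F$ singles out one such $W_E$-orbit, and a representative $\alpha$ with $\Theta_E = \Phi_E[\alpha]_E$. Fix a lift $\widetilde{\gamma} \in W_F$ of $\gamma$. Then $\Phi_E[\widetilde{\gamma}\alpha]_E = \widetilde{\gamma}^*\Phi_E[\alpha]_E = \gamma^*\Theta_E$, using the equivariance clause of Theorem~\ref{ramification}; hence $\widetilde{\gamma}\alpha$ is an admissible representative for the lift $\gamma^*\Theta_E$, and $\Lambda^+_{\gamma^*\Theta_E}(\sigma) = [\chi(\widetilde{\gamma}\alpha)]$.

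Now invoke the explicit recipe for $[\chi(\alpha)]$: it is recovered from $\sigma$ by taking the $\alpha$-isotypic component $\sigma_\alpha$, restricting to $I_{T_\alpha}$, and decomposing as a sum of twists of the distinguished extension $\rho_\alpha$. Conjugation by $\widetilde{\gamma}$ carries $\sigma_\alpha$ to $\sigma_{\widetilde{\gamma}\alpha}$, carries $T_\alpha$ to $T_{\widetilde{\gamma}\alpha} = \widetilde{\gamma}T_\alpha$, and carries $\rho_\alpha$ to $\rho_{\widetilde{\gamma}\alpha}$ (the $p$-primary determinant condition is preserved by conjugation, so the distinguished extension goes to the distinguished extension). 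Since the roots of unity $\mu_{T_\alpha} = \mu_E$ used to define the characters are fixed setwise and the Artin map is equivariant for $\widetilde{\gamma}$, we get $\widetilde{\gamma}^*[\chi(\widetilde{\gamma}\alpha)] = [\chi(\alpha)]$ as orbits of characters of $\be_{n/\delta(\Theta_F)}^\times$. This is exactly the relation $g^*[\chi(g\alpha)] = [\chi(\alpha)]$ already recorded above, with $g = \widetilde{\gamma}$; and the action of $\widetilde{\gamma}^*$ on $X_R(\Theta_F)$ factors through its image $\gamma$ in $\Gal(E/F)$ because $T_\alpha/E$ is totally ramified, so $\mu_{T_\alpha} = \mu_E$ and only the residual action matters.

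Combining, $\gamma^*\Lambda^+_{\gamma^*\Theta_E}(\sigma) = \gamma^*[\chi(\widetilde{\gamma}\alpha)] = \widetilde{\gamma}^*[\chi(\widetilde{\gamma}\alpha)] = [\chi(\alpha)] = \Lambda^+_{\Theta_E}(\sigma)$, as claimed. The only point requiring a little care---and the main (minor) obstacle---is checking that the distinguished $p$-primary extension $\rho_\alpha$ is genuinely compatible with conjugation, i.e. $\widetilde{\gamma}$ sends $\rho_\alpha$ to $\rho_{\widetilde{\gamma}\alpha}$ and not merely to some twist of it; but this is immediate from the uniqueness in~\cite{BHeffective}~1.3, since a topological automorphism of $W_F$ preserves determinant characters and the property of having $p$-power order.
\end{proof}
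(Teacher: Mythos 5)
Your proof is correct and follows essentially the same route as the paper: identify, via the equivariance clause of the Ramification Theorem, that a representative for $\gamma^*\Theta_E$ is $\widetilde{\gamma}\alpha$ for a lift $\widetilde{\gamma}\in W_F$ of $\gamma$, and then apply the relation $g^*[\chi(g\alpha)]=[\chi(\alpha)]$ already recorded in the text. Your extra verification that conjugation sends $\rho_\alpha$ to $\rho_{\widetilde{\gamma}\alpha}$ and that the action factors through $\Gal(E/F)$ simply spells out details the paper leaves to the preceding paragraph.
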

\begin{proof}
By Theorem~\ref{ramification}, if~$\Theta_E = \Phi_E[\alpha]$ then $\gamma^*\Theta_E = \Phi_E(\ad(g)^*[\alpha])$ for any lift~$g \in W_F$ of~$\gamma$. We have seen in the proof of Lemma~\ref{changerepresentative} that $g^*[\chi(\ad(g)^*\alpha)] = [\chi(\alpha)]$, which implies the lemma.
\end{proof}

Now we prove that the endo-class and the level zero part of a supercuspidal inertial type determine it uniquely, in analogy with the case of~$\GL_n(F)$.

\begin{pp}\label{sameinertialparameter}
Let~$\sigma_1, \sigma_2$ be irreducible $R[W_F]$-representations. 
Assume that~$\sigma_1, \sigma_2$ have the same endo-class~$\Theta_F$ under Theorem~\ref{ramification}.
Fix a lift~$\Theta_E \to \Theta_F$. 
Then $\Lambda^+(\sigma_1, \Theta_E) = \Lambda^+(\sigma_2, \Theta_E)$ if and only if $\sigma_1 |_{I_F} \cong \sigma_2 |_{I_F}$.
\end{pp}
\begin{proof}
Choose a representative~$\alpha$ of the $W_{E}$-orbit of representations of~$P_F$ attached to~$\Theta_E$.
By Remark~\ref{recoverisotypic}, the restriction to~$I_{T_\alpha}$ of the isotypic component~$\sigma_{i, \alpha}$ is 
\begin{equation}\label{restrictiontoinertia}
\sigma_{i, \alpha} | I_{T_\alpha} \cong \rho_\alpha \otimes \bigoplus_{\xi \in \Lambda^+(\sigma_i, \Theta_E)}\xi.
\end{equation}
Hence $\sigma_i|_{I_F}$ determines~$\Lambda^+(\sigma_i, \Theta_E)$, since it determines the isomorphism class of $\sigma_{i, \alpha}|_{I_{T_\alpha}}$.
For the converse, the Mackey formula
\begin{equation}\label{Mackeyformula}
\Res^{W_F}_{I_F}\sigma = \Res^{W_F}_{I_F}\Ind_{W_{T_\alpha}}^{W_F}\sigma_\alpha = \bigoplus_{\gamma  \in W_{T_\alpha} \backslash W_F / I_F} \Ind_{\gamma^{-1}I_{T_{\alpha}}\gamma}^{I_F}\Res^{\gamma^{-1}W_{T_{\alpha}}\gamma}_{\gamma^{-1}I_{T_{\alpha}}\gamma}\ad(\gamma)^*\sigma_{\alpha}.
\end{equation}
implies that it suffices to prove that $\sigma_{i, \alpha}|_{I_{T_\alpha}} \cong \sigma_{i, \alpha}|_{I_{T_\alpha}}$ if~$\Lambda^+(\sigma_1, \Theta_E) = \Lambda^+(\sigma_2, \Theta_E)$, which is immediate from~(\ref{restrictiontoinertia}).
\end{proof}

By Proposition~\ref{sameinertialparameter}, we have a well-defined injection
\begin{displaymath}
\Lambda^+(-, \Theta_E) : (\text{supercuspidal inertial types of dimension~$n$ over~$R$ containing~$\Theta_F$}) \to \Gamma(\Theta_F) \backslash X_R(\Theta_F)
\end{displaymath}
with image the $\be$-regular orbits. 
The left-hand side consists, of course, of those irreducible representations whose restriction to~$P_F$ corresponds to~$\Theta_F$. 

Now we extend this to certain non-supercuspidal types.
The Langlands parameter of a simple $R[\GL_n(F)]$-representation~$\pi$ has restriction to inertia isomorphic to $\sigma^{\oplus m}$, for some~$m | n$ and some supercuspidal inertial type~$\sigma$ of dimension $n/m$. 
This motivates the following definition.

\begin{defn}
A simple inertial type of endo-class~$\Theta_F$ is a representation of~$R[W_F]$ isomorphic to~$\sigma^{\oplus m}$, for some supercuspidal inertial type~$\sigma$ of endo-class~$\Theta_F$.
\end{defn}

We extend the map~$\Lambda^+$ to simple inertial types of endo-class~$\Theta_F$ and dimension~$n$ by putting
\begin{displaymath}
\Lambda^+(\sigma^{\oplus m}, \Theta_E) = N^* \Lambda^+(\sigma, \Theta_E)
\end{displaymath}
where $N : \be_{n/\delta(\Theta_F)}^\times \to \be_{n/m\delta(\Theta_F)}^\times$ is the norm map.

\begin{rk}
To see that~$n/m\delta(\Theta_F)$ is an integer, one could notice that it equals $\dim \sigmatr$ by the proof of Lemma~\ref{dimensioncomputation}, since $\sigma$ is an $n/m$-dimensional irreducible representation of~$W_F$ of endo-class~$\Theta_F$.
\end{rk}

Finally, because of the statement of~\cite[Types Theorem]{BHeffective}, it will be convenient to twist~$\Lambda^+$ by a certain automorphism of~$\be_{n/\delta(\Theta_F)}^\times$. Let~$p^r$ be the degree of any parameter field~$P$ of~$\Theta_F$ over the maximal tamely ramified extension of~$F$ it contains (this is the degree of the ``wildly ramified part'' of the endo-class~$\Theta_F$). 
\begin{defn} \label{levelzeroLanglandsparameter}
Let~$\tau$ be a simple inertial type of endo-class~$\Theta_F$. Define the level zero part of~$\tau$ by
\begin{displaymath}
\Lambda(\tau, \Theta_E) = \Lambda^+(\tau, \Theta_E)^{p^{-r}}.
\end{displaymath}
\end{defn}

\subsection{Comparison with~$\GL_n(F)$.}\label{comparisonGL_n}
Let~$\kappa$ be a conjugacy class in~$\GL_n(F)$ of $\beta$-extensions of maximal simple characters of endo-class~$\Theta_F$.
Then $\kappa$ together with a lift~$\Theta_E \to \Theta_F$ defines a map~$\Lambda(-, \Theta_E, \kappa)$ on simple inertial classes with endo-class~$\Theta_F$. 
On the other hand, the local Langlands correspondence over~$\bC$ puts the simple $\bC$-inertial classes with endo-class~$\Theta_F$ in bijection with the simple $\bC$-inertial types with endo-class~$\Theta_F$.
We also write~$\rec$ for this bijection.
From now on in this paper we will work with a fixed choice of lift~$\Theta_E \to \Theta_F$, which we will therefore omit from the notation.
The specific choice of lift will not affect the results: what matters here is that the same lifts gets used for~$\GL_n(F)$ and for~$W_F$.

\begin{defn}
Let~$\kappa$ be a maximal $\beta$-extension of endo-class~$\Theta_F$ in~$\GL_n(F)$.
We define a permutation~$\xi(\kappa)$ of the set~$\Gamma(\Theta_F) \backslash X_\bC(\Theta_F)$, depending on~$\kappa$, via
\begin{displaymath}
\xi(\kappa)(\Lambda(\pi, \kappa)) = \Lambda(\rec\,\pi)
\end{displaymath}
for any simple irreducible representation~$\pi$ of~$\GL_n(F)$ with endo-class~$\Theta_F$. 
\end{defn}

The proof of our main result will be based on the following two properties of~$\xi(\kappa)$, one of which involves reduction modulo a prime. 
If~$\ell \ne p$ is a prime number, any isomorphism~$\iota_\ell : \bC \to \cbQ_\ell$ defines a bijection~$\rec_\ell$ analogous to~$\rec$ by means of the resulting identification of simple inertial classes and simple inertial types over~$\bC$ with their analogues over~$\cbQ_\ell$. 
Then the map~$\rec_\ell$ defines a permutation~$\xi_\ell(\kappa)$ of $\Gamma(\Theta_F) \backslash X_{\cbQ_l}(\Theta_F)$ in the same way, and~$\xi_\ell(\kappa)$ is intertwined with~$\xi(\kappa)$ by~$\iota_\ell$.

\begin{lemma}\label{sameparametricdegree}
Define the parametric degree of $[\chi] \in \Gamma(\Theta_F) \backslash X_\bC(\Theta_F)$ as the size of the orbit~$[\chi]$.
Then the map~$\xi(\kappa)$ preserves parametric degrees.
\end{lemma}
\begin{proof}
This is an immediate consequence of the definition of the level zero maps together with the compatibility of~$\rec$ with the Bernstein--Zelevinsky classification.
\end{proof}
 
\begin{thm}\label{samelregularpart}
Let~$\ell \ne p$ be a prime number.
Two elements of~$\Gamma(\Theta_F) \backslash X_\bC(\Theta_F)$ have the same $\ell$-regular part if and only if their images under $\xi(\kappa)$ have the same $\ell$-regular part.
\end{thm}
\begin{proof}
(Compare~\cite[Section~6.2]{Vignerasl1}.)
By the discussion above, it suffices to fix an isomorphism $\iota_\ell: \bC \isom \cbQ_\ell$ and to prove the theorem for~$\xi_{\ell}(\kappa)$ instead of~$\xi(\kappa)$.
Since~$\xi_\ell(\kappa)$ is a bijection, it suffices to prove that it preserves equality of $\ell$-regular parts. 
Consider two simple irreducible integral $\cbQ_\ell[\GL_n(F)]$-representations~$\pi_i$ with endo-class~$\Theta_F$. 
Write $\Lambda(\pi_i, \kappa) = [\psi_i]$, and assume $[\psi_1]^{(\ell)} = [\psi_2]^{(\ell)}$. 

We need to prove that $\xi_\ell(\kappa)[\psi_1]^{(\ell)} = \xi_\ell(\kappa)[\psi_2]^{(\ell)}$, or equivalently that $\Lambda^+(\rec_{\cbQ_\ell}\pi_1)^\pexp{\ell} = \Lambda^+(\rec_{\cbQ_\ell} \pi_2)^{\pexp{\ell}}$.
Assume that~$\psi_i$ is norm-inflated from an $\be$-regular character~$\mu_i$ of~$\be_{n/a_i\delta(\Theta_F)}^\times$. 
By \cite[Lemma~3.20]{InertialJL}, the equality $\Lambda(\br_\ell\pi_i, \br_\ell\kappa) = [\psi_i]^{(\ell)}$ holds. 
(Strictly speaking, we should work with an irreducible factor of~$\br_\ell \pi_i$. However, the irreducible factors of~$\br_\ell \pi_i$ all have the same supercuspidal support, hence there is no ambiguity in writing~$\Lambda(\br_\ell\pi_i, \br_\ell\kappa)$. The same remark applies to the following lemma.)

\begin{lemma}\label{samesupercuspidalsupport}
We can choose the~$\pi_i$ in their inertial class so that the~$\br_\ell(\pi_i)$ have the same supercuspidal support. 
\end{lemma}
\begin{proof}
First choose the~$\pi_i$ so that they have supercuspidal support of the form $ (\pi_i^0)^{\otimes a_i}$ for integral representations~$\pi_i^0$. 
By the classification of cuspidal $\cbF_\ell[\GL_n(F)]$-representations, the supercuspidal support of~$\br_\ell(\pi_i^0)$  has the form $\tau_i \otimes \cdots \otimes \tau_i(m_i-1)$ for some supercuspidal $\tau_i$.
It follows from the fact that $\Lambda(\br_\ell\pi_i, \br_\ell\kappa) = [\psi_i]^{(\ell)}$ together with our assumption on~$[\psi_i]^\pexp{\ell}$ that $\br_\ell(\pi_1)$ and~$\br_\ell(\pi_2)$ are inertially equivalent.
By uniqueness of supercuspidal support, it follows that $a_1m_1 = a_2 m_2$ and~$\tau_1$ is an unramified twist of~$\tau_2$.
So there exist unramified $\cbQ_\ell$-characters~$\chi_i$ such that any two $\cbQ_\ell$-representations~$\pi_i$ with supercuspidal support $\chi_i\pi_i^0 \otimes \chi_i\pi_i^0(m_i) \otimes \cdots \otimes \chi_i\pi_i^0((a_i-1)m_i)$ satisfy the conclusion of the lemma.
(One can take~$\chi_1 = 1$ and then~$\chi_2$ such that $\tau_1 = \br_\ell(\chi_2) \tau_2$.)
\end{proof}

Choose~$\pi_1, \pi_2$ as in Lemma~\ref{samesupercuspidalsupport}.
Write~$\tau_i$ for the semisimple $W_F$-representation underlying~$\rec_{\cbQ_\ell}(\pi_i)$. 
It is a direct sum of~$a_i$ copies of some irreducible representation~$\sigma_i$. 
By \cite[Th\'eor\`eme principal]{Vignerasl2}, our assumption on the~$\pi_i$ implies that $\br_\ell(\tau_1) = \br_\ell(\tau_2)$.

By Theorem~\ref{ramification}, there exists an irreducible representation~$\alpha$ of~$P_F$ that is contained in both~$\sigma_1$ and~$\sigma_2$ and whose $W_E$-orbit corresponds to~$\Theta_E$.
Let~$W_T$ be the stabilizer of~$\alpha$ in~$W_F$ and fix a $\cbQ_\ell$-integral extension~$\rho(\alpha)$ of~$\alpha$ to~$W_T$.
Then~$\sigma_i$ can be written as the induction of its $\alpha$-isotypic component: 
\[
\sigma_i \cong \Ind_T^F (\rho(\alpha) \otimes \sigmatr_i(\alpha)).
\]
There exist integers~$d_i = n/a_i\delta(\Theta_F)$ and tamely ramified characters~$\chi_i = \chi_i(\alpha)$ of~$T_{d_i}^\times$ such that $\sigmatr_i(\alpha) = \Ind_{T_{d_i}}^T \chi_i(\alpha)$ and $\sigma_i = \Ind_{T_{d_i}}^F\rho(\alpha) \otimes \chi_i(\alpha)$. 
By definition, we have $\Lambda^+(\rec_{\cbQ_\ell}\pi_i) = N^*[\chi_i|\mu_{T_{d_i}}]$, where we write~$N: \be_{n/\delta(\Theta_F)}^\times \to \be_{d_i}^\times$ for the norm, and~$\chi_i|\mu_{T_{d_i}}$ is viewed as a character of the residue field of~$T_{d_i}$, which is identified with~$\be_{d_i}$.
So it will suffice to prove that $(\chi_1|{\mu_{T_{d_1}}})^{(\ell)}$ and $(\chi_2|{\mu_{T_{d_2}}})^{(\ell)}$ are both norm-inflated from $\mu_T$-regular characters of the same~$\mu_{T_r}$ for some $r>0$, and that these characters of~$\mu_{T_r}$ are conjugate over~$T$.

Since the wild inertia group~$P_F$ is a pro-$p$ group, we can identify its representations over~$\cbQ_\ell$ and~$\cbF_\ell$, and we will write~$\alpha$ to indicate the representation~$\br_\ell(\alpha)$.
Since~$\rho(\alpha)$ is a $\cbQ_\ell$-integral extension of~$\alpha$ to~$W_T$, its reduction~$\br_\ell(\rho(\alpha))$ is irreducible, and we will also denote it by~$\rho(\alpha)$.
Now we use the fact that $\br_\ell(\sigma_i)$ is the semisimplification of $\Ind_{T_{d_i}}^F (\rho(\alpha) \otimes \br_\ell(\chi_i))$. 
The character~$\xi_i = \br_\ell(\chi_i)$ need not be $\ell$-regular, and it extends to its stabilizer in~$W_T$, which is the Weil group of some intermediate unramified extension~$T_{r_i}$ of~$T$. 
Since~$\rho(\alpha)$ extends to~$W_T$, hence to~$W_{T_{r_i}}$, the induction $\Ind_{T_{d_i}}^{T_{r_i}}(\rho(\alpha) \otimes \xi_i)$ semisimplifies to a direct sum (possibly with multiplicity) of representations of the form $\rho(\alpha) \otimes \widetilde{\xi}_i$, where~$\tld \xi_i$ ranges over extensions of~$\xi_i$ to~$T_{r_i}^\times$. 
All these extensions are unramified twists of each other.

Next we are going to prove that each induced representation $\Ind_{T_{r_i}}^F(\rho(\alpha) \otimes \widetilde{\xi}_i)$ is irreducible.
To do so, observe first that the representation
\[
X_i = \Ind_{T_{r_i}}^T(\rho(\alpha) \otimes \widetilde{\xi}_i) \cong \rho(\alpha) \otimes \Ind_{T_{r_i}}^T(\widetilde{\xi}_i)
\]
is irreducible, since its restriction to~$W_{T_{r_i}}$ is semisimple and has multiplicity one.
Now we have a $P_F$-linear decomposition
\[
\Ind_T^F(X_i) = \bigoplus_{g \in W_F/W_T} gX_i
\]
and since~$W_T$ is the stabilizer of~$\alpha$ in~$W_F$, this is actually the decomposition into isotypic components for~$P_F$.
Since~$gX_i$ is an irreducible representation of~$gW_Tg^{-1}$, it follows that~$\Ind_T^F(X_i)$ is irreducible over~$W_F$.

It follows from the above that $\br_\ell(\sigma_i)$ is a direct sum of unramified twists of a single irreducible representation, which can be taken to be any of the $\Ind_{T_{r_i}}^F(\rho(\alpha) \otimes \widetilde{\xi}_i)$.
Since~$\br_l(\tau_1) = \br_l(\tau_2)$ and $\br_l(\tau_i)$ is a multiple of~$\br_l(\sigma_i)$ in the Grothendieck group, we see that $\Ind_{T_{r_1}}^F (\rho(\alpha) \otimes \widetilde{\xi}_1)$ and $\Ind_{T_{r_2}}^F (\rho(\alpha) \otimes \widetilde{\xi}_2)$ are unramified twists of each other. Comparing dimensions, this implies that~$r_1$ is equal to~$r_2$, and we denote their common value by~$r$. 
Passing to the $\alpha$-isotypic components~$X_i$, we find that the restriction to~$\mu_{T_r}$ of the~$\widetilde{\xi}_i$ are conjugate over~$T$. But since $\xi_i = \br_l(\chi_i)$ this implies that $(\chi_1|{\mu_{T_{d_1}}})^{(l)}$ and $(\chi_2|{\mu_{T_{d_2}}})^{(l)}$ are conjugate over~$T$, after descending to~$\mu_{T_r}$ via the norm.
\end{proof}

\section{Canonical $\beta$-extensions.}
We will work over the complex numbers unless otherwise stated.
Fix an endo-class~$\Theta_F$ of degree dividing~$n$.
We begin by defining the conjugacy class~$\kappa_{n, F}$ of maximal $\beta$-extensions in~$\GL_n(F)$ that appears in the introduction.
To do so, fix a maximal simple character~$\theta$ of endo-class~$\Theta_F$ in~$\GL_n(F)$ and recall that any two $\beta$-extensions of~$\theta$ are twists of one another by a character of~$\be^\times$ inflated through
\[
J_\theta/J^1_\theta \isom \GL_{n/\delta(\Theta_F)}(\be) \xrightarrow{\mathrm{det}} \be^\times.
\]
Since the order of~$\be^\times$ is coprime to~$p$ and the dimension of any $\beta$-extension is a power of~$p$, and~$J^1_\theta$ is a pro-$p$ group, there exists a unique $\beta$-extension of~$\theta$ whose determinant character is $p$-primary (that is to say, has order equal to a power of~$p$).
We will refer to it as the \emph{$p$-primary} $\beta$-extension of~$\Theta_F$ and denote it by~$\kappa_p$.

Now we describe~$\kappa_{n, F}$ as a quadratic twist of~$\kappa_p$.
Let~$\epsilon_\theta^1$ be the symplectic sign character of~$\theta$  (as defined in~\cite[5.4]{BHeffective}). 
Write~$\epsilon_{\Gal}$ for the quadratic character of~$\be^\times$ which is nontrivial if and only if $p \not = 2$ and the ramification degree of a tame parameter field of~$\Theta_F$ over~$F$ is even. 
Then \cite[Types Theorem]{BHeffective} implies that if we set
\[
\kappa_{n, F} = \epsilon_\Gal \epsilon^1_\theta \kappa_p
\]
then
\[
\Lambda(\pi, \kappa_{n, F}) = \Lambda(\rec\, \pi)
\]
for every supercuspidal representation~$\pi$ of~$\GL_n(F)$ with endo-class~$\Theta_F$.
(The reference~\cite{BHeffective} is written in a slightly different language than this paper, and it is a lengthy but routine matter to translate between the two.)
In other words, the permutation~$\kappa_{n, F}$ fixes all the Galois orbits of $\be^\times$-regular characters of~$\be_{n/\delta(\Theta_F)}^\times$.
The formal properties of~$\xi(\kappa_{n, F})$ given in Section~\ref{comparisonGL_n} now allow us to deduce that~$\xi(\kappa_{n, F})$ is the identity.
Namely, we have the following theorem.

\begin{thm}\label{canonicalbetaextensions}
Assume that for every~$t>0$ there is a maximal $\beta$-extension~$\kappa_{tn, F}$ of endo-class~$\Theta_F$ in~$\GL_{tn}(F)$ such that~$\xi(\kappa_{tn, F})[\xi] = [\xi]$ whenever~$\xi$ is a $\be$-regular character of~$\be_{tn/\delta(\Theta_F)}^\times$.
Then~$\xi(\kappa_{tn, F})$ is the identity for all~$t$.
\end{thm}
\begin{proof}
This is proved using a technique introduced in \cite[Lemma~9.11]{SecherreStevensJL}. 
Replacing~$n$ by~$tn$, it suffices to prove the theorem when~$t =1$.
We write~$\kappa$ for~$\kappa_{n, F}$.
Assume that~$\alpha$ is a character of~$\be_{n/\delta(\Theta_F)}^\times$ which is not $\be$-regular. 
We need to prove that $\xi(\kappa)[\alpha] = [\alpha]$. 
Consider a simple representation~$\pi$ of~$\GL_n(F)$ with supercuspidal support~$\pi_0^{\otimes r}$ such that $\Lambda(\pi, \kappa) = [\alpha]$.

Let~$a \geq 1$ be some large integer ($a \geq 7$ will suffice) and write~$\kappa^*$ for the maximal $\beta$-extension in~$\GL_{an}(F)$ compatible with~$\kappa$. 
Let~$\pi_a$ be a representation of~$\GL_{an}(F)$ with supercuspidal support~$\pi_0^{\otimes ar}$. 
Since compatible $\beta$-extensions satisfy a transitivity property (see~\cite[Proposition~3.16]{InertialJL}) we know that $\Lambda(\pi_a, \kappa^*)$ is the inflation~$[\alpha^*]$ of~$\alpha$ to~$\be_{an/\delta(\Theta_F)}^\times$. 

By~(\ref{Langlandssupport}) we have $\rec(\pi_a)|_{I_F} = \rec(\pi_0)|_{I_F}^{\oplus ar}$, so that if $\Lambda(\rec \, \pi) = [\mu]$ then $\Lambda(\rec \, \pi_a) = [\mu^*]$. 
Hence by definition we have $[\mu] = \xi(\kappa)[\alpha]$ and $[\mu^*] = \xi(\kappa^*)[\alpha^*]$ (although at this stage we do not know whether $[\alpha] = [\mu]$). 
So it suffices to prove that $\xi(\kappa^*)[\alpha^*] = [\alpha^*]$: since the norm is surjective in finite extensions of finite fields, this will imply that~$[\alpha] = [\mu]$.
Notice that it also follows that $\xi(\kappa^*)[\alpha^*] = (\xi(\kappa)[\alpha])^*$, which will be useful later in the proof. 

Write $\be[\alpha]^\times$ for the fixed field of the stabilizer of~$\alpha$ in~$\Gal(\be_{n/\delta(\Theta_F)} / \be)$. 
By~\cite[Lemma~8.5, Remark~8.7]{SecherreStevensJL}, there exist an $\be$-regular character~$\beta$ of~$\be_{an/\delta(\Theta_F)}^\times$ and a prime number~$\ell \not = p$ not dividing the order of~$\be[\alpha]^\times$ such that $\alpha^*$ is the $\ell$-regular part of~$\beta$. 
By Proposition~\ref{samelregularpart} we have that $(\xi(\kappa^*)[\alpha^*])^{(\ell)} = (\xi(\kappa^*)[\beta])^{(\ell)}$, and it suffices now to prove that $\xi(\kappa^*)[\beta] = [\beta]$ and that $\xi(\kappa^*)[\alpha^*]$ is $\ell$-regular. 
That $\xi(\kappa^*)[\alpha^*]$ is $\ell$-regular follows by Proposition~\ref{sameparametricdegree}, because it has the same parametric degree as~$[\alpha^*]$ and~$\ell$ does not divide the order of~$\be[\alpha]^\times$.

By assumption, there exists some $\beta$-extension~$\kappa_{an, F}$ in~$\GL_{an}(F)$ such that $\xi(\kappa_{an, F})[\beta] = [\beta]$. 
So there exists some character~$\delta$ of~$\be^\times$ such that $\xi(\kappa^*)[\beta] = [\delta\beta]$ for \emph{every} $\be$-regular character~$\beta$ of~$\be_{an/\delta(\Theta_F)}^\times$, because~$\kappa_{an, F}$ and~$\kappa^*$ are $\be^\times$-twists of each other. 
We will prove that~$\delta$ is trivial: this implies the theorem.

Fix an $\be$-regular character~$\alpha_+$ of~$\be_{n/\delta(\Theta_F)}^\times$. 
Because~$a$ is large enough, again by~\cite[Lemma, 8.5, Remark~8.7]{SecherreStevensJL} there exist some prime number $q \not = p$ not dividing the order of~$\be_{n/\delta(\Theta_F)}^\times = \be[\alpha_+]^\times$ and some $\be$-regular character $\beta_+$ of $\be_{an/\delta(\Theta_F)}^\times$ such that~$\alpha^*_+$ is the $q$-regular part of~$\beta_+$.

We know that $\xi(\kappa)[\alpha_+] = [\alpha_+]$ by regularity of~$\alpha_+$. 
On the other hand, $\xi(\kappa^*)[\alpha^*_+] = [\delta \beta_+]^{(q)} = [\delta^{(q)}\alpha^*_+]$, and since $\xi(\kappa^*)[\alpha^*_+] = (\xi(\kappa)[\alpha_+])^*$ we find that $[\alpha^*_+] = [\delta^{(q)} \alpha^*_+]$.
Since~$\delta$ is a character of~$\be^\times$, and~$q$ does not divide the order of~$\be_{n/\delta(\Theta_F)}^\times$, we also know that~$\delta^\pexp{q} = \delta$. 
So we can write $\delta =(\alpha^*_+)^{|\be|^{i}-1}$ for some integer $i \in \{0, \ldots, \frac{n}{\delta(\Theta_F)} - 1 \}$. 

Now we can take~$\alpha_+$ to be a generator of the character group of $\be_{n/\delta(\Theta_F)}^\times$, hence we can assume that~$\alpha_+$ has order $|\be|^{n/\delta(\Theta_F)} - 1$. 
However, the equality $\delta =(\alpha^*_+)^{|\be|^{i}-1}$ implies that the order of~$\alpha^*_+$ divides $(|\be|^i - 1)(|\be| - 1)$.
Since $|\be| \geq 2$ we have $|\be|^{n/\delta(\Theta_F)} - 1 > (|\be|^i - 1)(|\be| - 1)$, hence $i = 0$ and~$\delta$ is trivial.
\end{proof}

\begin{rk}\label{uniquenessbeta}
Since any two $\beta$-extensions of the same maximal simple character are twists of each other by a character of~$\be^\times$, the property that~$\xi(\kappa_{n, F})$ is the identity determines~$\kappa_{n, F}$ uniquely.
Actually, by~\cite[Proposition~3.6]{InertialJL}, the fact that~$\xi(\kappa_{n, F})$ fixes the $\be$-regular characters already suffices to determine~$\kappa_{n, F}$ uniquely.
\end{rk}

It is now a simple matter to prove our main result, namely that the canonical $\beta$-extensions~$\kappa_{n, F}$ form a compatible family as~$n$ varies amongst multiples of~$\delta(\Theta_F)$.

\begin{thm}\label{compatiblecanonical}
Let~$\Theta_F$ be an endo-class over~$F$, and fix a positive integer~$n$.
Then~$\kappa_{n, F}$ is compatible with~$\kappa_{tn, F}$ for each positive integer~$t$.
\end{thm}
\begin{proof}
By~\cite[Proposition~3.16]{InertialJL} it suffices to prove that~$\kappa_{n, F}$ is compatible with~$\kappa_{\delta(\Theta_F), F}$, which is a $\beta$-extension in~$\GL_{\delta(\Theta_F)}(F)$. 
Write~$\kappa_{\delta(\Theta_F), F}^+$ for the $\beta$-extension in~$\GL_n(F)$ compatible with~$\kappa_{\delta(\Theta_F), F}$. 
Let~$\pi_{\delta(\Theta_F)}$ be a supercuspidal representation of~$\GL_{\delta(\Theta_F)}(F)$ with endo-class~$\Theta_F$ such that $\Lambda(\pi_{\delta(\Theta_F)}, \kappa_{\delta(\Theta_F), F}) = [1]$. 
There exists a character~$\chi$ of~$\be^\times$ such that~$\chi\kappa_{n, F} \cong \kappa_{\delta(\Theta_F), F}^+$, and then $\Lambda(\pi, \kappa_{n, F}) = \chi\Lambda(\pi, \kappa_{\delta(\Theta_F), F}^+)$ for all simple representations~$\pi$ of endo-class~$\Theta_F$.

Let~$\pi$ be a simple representation of~$\GL_{n}(F)$ with supercuspidal support inertially equivalent to $\pi_{\delta(\Theta_F)}^{\otimes n/ \delta(\Theta_F)}$. 
Then $\Lambda(\pi, \kappa_{\delta(\Theta_F), F}^+) = [1]$, by compatibility of~$\kappa_{\delta(\Theta_F), F}$ and~$\kappa_{\delta(\Theta_F), F}^+$. 
Since~$\xi(\kappa_{\delta(\Theta_F), F})$ is the identity, we know that $\Lambda(\rec \, \pi_{\delta(\Theta_F)}) = \Lambda(\pi_{\delta(\Theta_F)}, \kappa_{\delta(\Theta_F), F}) = [1]$.
But we also know that $\Lambda(\pi, \kappa_{n, F}) = \Lambda(\rec \, \pi)$, since~$\xi(\kappa_{n, F})$ is the identity.
By construction of the level zero map for Langlands parameters, we have that $\Lambda(\rec \, \pi)$ is inflated from $\Lambda(\rec \, \pi_{\delta(\Theta_F)})$, hence $\Lambda(\pi, \kappa_{n, F}) = [1]$. 
It follows that~$\chi = 1$, hence~$\kappa_{n, F}$ is compatible with~$\kappa_{\delta(\Theta_F), F}$.
\end{proof}

\subsection{The case of~$\GL_m(D)$.}\label{innerforms}
Using the Jacquet--Langlands correspondence we can now construct canonical maximal $\beta$-extensions in all inner forms~$\GL_m(D)$ of~$\GL_n(F)$.
We refer to~\cite{InertialJL} for analogues of the constructions in Section~\ref{invariantsG} for the group~$\GL_m(D)$.

\begin{thm}\label{betainnerforms}
Let~$D$ be a central division algebra over~$F$ of dimension~$d^2$ and let~$m$ be a positive integer such that~$md = n$. 
Let~$\Theta_F$ be an endo-class over~$F$ of degree dividing~$n$.
Then there exists a unique conjugacy class~$\kappa_{m, D}$ of maximal $\beta$-extensions in~$\GL_m(D)$ of endo-class~$\Theta_F$ such that
\[
\Lambda(\pi, \kappa_{m, D}) = \Lambda(\JL(\pi), \kappa_{n ,F})
\]
for all essentially square-integrable representations~$\pi$ of endo-class~$\Theta_F$, where~$\JL(\pi)$ denotes the Jacquet--Langlands transfer of~$\pi$ to~$\GL_n(F)$.
\end{thm}
\begin{proof}
The uniqueness part follows as in the case of~$\GL_n(F)$, see Remark~\ref{uniquenessbeta}.
Let~$\theta$ be a maximal simple character in~$\GL_m(D)$ of endo-class~$\Theta_F$.
Define a $\beta$-extension of~$\theta$ by setting
\[
\kappa_{m, D} = \epsilon_\Gal\epsilon_\theta^1\kappa_p
\]
where $\kappa_p$ is the $p$-primary $\beta$-extension and $\epsilon_{\Gal}$ is a quadratic character that is nontrivial if and only if $p \not = 2$ and the ramification degree of a tame parameter field of~$\Theta_F$ over~$F$ is even. 
Then the theorem is an immediate corollary of the main results of~\cite{InertialJL}, which imply that $\cl(\JL(\pi)) = \Theta_F$ and
\[
\Lambda(\pi, \epsilon_\Gal\kappa_{m, D}) = \Lambda(\JL(\pi), \epsilon_\Gal \kappa_{n, F}).
\]
\end{proof}

\bibliographystyle{amsalpha}
\bibliography{refpapers}

\end{document}